\newcommand{\Lg}{\mbox{$\mathfrak g$}}
\newcommand{\Lk}{\mbox{$\mathfrak k$}}
\newcommand{\R}{\mbox{$\mathbf R$}}
\newcommand{\C}{\mbox{$\mathbf C$}}
\newcommand{\Q}{\mbox{$\mathbf H$}}
\newcommand{\cod}{\mbox{$\mbox{codim}\;$}}
\newcommand{\SU}[1]{\mbox{$\mathbf{SU}(#1)$}}
\newcommand{\U}[1]{\mbox{$\mathbf{U}(#1)$}}
\newcommand{\SP}[1]{\mbox{$\mathbf{Sp}(#1)$}}
\newcommand{\SO}[1]{\mbox{$\mathbf{SO}(#1)$}}
\newcommand{\OG}[1]{\mbox{$\mathbf{O}(#1)$}}
\newcommand{\Spin}[1]{\mbox{$\mathbf{Spin}(#1)$}}
\newcommand{\E}[1]{\mbox{$\mathbf{E}_{#1}$}}
\newcommand{\F}{\mbox{$\mathbf{F}_4$}}
\newcommand{\G}{\mbox{$\mathbf{G}_2$}}
\newcommand{\cref}[1]{Corollary~\ref{#1}}
\newcommand{\lref}[1]{Lemma~\ref{#1}}
\newtheorem{thm}{Theorem}[section]
\newtheorem*{thm*}{Theorem}
\newtheorem*{thmmain*}{MAIN THEOREM}
\newtheorem{lem}[thm]{Lemma}
\newtheorem*{prop*}{Proposition}
\theoremstyle{definition}
\theoremstyle{remark}
\title{Representations whose minimal reduction\\has a toric identity component}
\author{Claudio Gorodski and Alexander Lytchak}
\address{Instituto de Matem\'atica e Estat\'\i stica, Universidade de 
S\~ao Paulo, Rua do Mat\~ao, 1010, S\~ao Paulo, SP 05508-090, Brazil}
\email{gorodski@ime.usp.br}
\address{Mathematisches Institut, Universit\"at zu K\"oln, 
Weyertal 86-90, 50931 K\"oln, Germany}
\email{alytchak@math.uni-koeln.de}
\thanks{The first author was partially supported by the
CNPq grant 302472/2009-6 and the FAPESP project 2011/21362-2.}
\thanks{The second author was partially supported
by a Heisenberg grant of the DFG and by the SFB 878
{\it Groups, geometry and actions}}
\date{\today}
\begin{document}

\begin{abstract}
We classify irreducible representations of connected compact Lie
groups whose orbit space is isometric to the orbit space
of a representation of a finite extension of
(positive dimensional) toric group. They turn out to be exactly the non-polar
irreducible representations preserving an isoparametric submanifold
and acting with cohomogeneity one on it.
\end{abstract}

\maketitle

\section{Introduction}

A representation $\rho:G\to\OG V$ of a compact Lie group
on an Euclidean space~$V$ is called \emph{polar} if there exists a
subspace $\Sigma$, called a \emph{section}, that meets all $G$-orbits and meets them
always orthogonally~\cite{PT,BCO}. In this case, the normalizer
$N(\Sigma)$ acts on $\Sigma$ with kernel equal to the centralizer
$Z(\Sigma)$, the quotient group $\mathcal W=N(\Sigma)/Z(\Sigma)$ is finite, and the
inclusion $\Sigma\to V$ induces an isometry between orbit spaces
$\Sigma/\mathcal W=V/G$ with the quotient metrics. Conversely, assume $\rho:G\to\OG V$ is a representation
whose orbit space $X=V/G$ can also be given as the orbit space of a representation
of a finite group. On one hand, the set of regular points $X_{reg}$ is a
flat Riemannian manifold. On the other hand, due to O'Neill's formula, the horizontal
distribution of the Riemannian submersion $V_{reg}\to X_{reg}$ is integrable.
It follows that $\rho$ is polar~\cite{Ale,hlo}. Thus we have the following
characterization: \emph{a representation is polar
if and only if its orbit space is isometric to the orbit space
of a representation of a finite group.}

This paper follows the program introduced in~\cite{GL}, namely,
to hierarchize the representations of compact Lie groups in terms of the
complexity of their orbit spaces, viewed as metric spaces.
Dadok~\cite{D} has classified polar representations of connected groups
and showed that they are orbit-equivalent to the isotropy representations
of symmetric spaces. Herein we enlarge the class of polar
representations by replacing
``finite group'' by ``finite extension of toric group'' in the above
characterization.  Namely, we consider a class of representations defined
by the condition that the orbit space is isometric to the orbit space
of a representation of a finite extension of
toric group, and we classify such representations in the irreducible
case and for connected groups.

 It turns out that such representations have a remarkable connection with isoparametric submanifolds and polar representations. And, rather surprisingly,
only few families of non-polar representations can occur.
Our  result can be stated as follows.

\begin{thm}\label{main}
Let $\rho:G\to\OG{V}$  be an effective irreducible representation of a
connected compact Lie group $G$ on a Euclidean space $V$. Assume that
$\rho$ is not polar. Then the following conditions are equivalent:
\begin{enumerate}
\item[(a)] There is a representation $\tau:H\to\OG{W}$ of a compact
Lie group with toric identity component such that the orbit spaces
$V/G$, $W/H$ are isometric.
\item[(b)] There is a connected subgroup $\hat G$ of $\OG{V}$ containing $\rho(G)$
which acts polarly on $V$ with cohomogeneity one less than the cohomogeneity  of $\rho$.
\item[(c)] The action of $G$ leaves an isoparametric submanifold $S$ of $V$ invariant
and acts with cohomogeneity one on $S$.
\end{enumerate}
Moreover, the representations satisfying any
of the above conditions consist of three
disjoint families as follows:
\begin{enumerate}
\item[(i)] $\rho$ is one of the  non-polar irreducible representations of cohomogeneity
three:
\[ \begin{array}{|c|c|c|}
\hline
G & \rho & \textsl{Conditions} \\
\hline
\SO2\times\Spin9 & \R^2\otimes_{\mathbf R}\R^{16} & - \\
\U 2\times\SP n & \C^2\otimes_{\mathbf C}\C^{2n} &  n\geq2  \\
\SU2\times\SP n & S^2(\C^2)\otimes_{\mathbf H}\C^{2n} &  n\geq2 \\
\hline
\end{array} \]
\item[(ii)] The group $G$ is the semisimple factor of an irreducible polar representation
of Hermitian type such that action of $G$ is not orbit-equivalent to the polar representation:
\[ \begin{array}{|c|c|c|}
\hline
G & \rho & \textsl{Conditions} \\
\hline
\SU n & S^2\mathbf C^n & n\geq3 \\
\SU n & \Lambda^2\mathbf C^n & n=2p\geq6 \\
\SU n\times\SU n & \C^n\otimes_{\mathbf C}\C^n  & n\geq3 \\
\E6 & \mathbf C^{27} & - \\
\hline
\end{array} \]
\item[(iii)] $\rho$ is one of the two exceptions: $\SO3\otimes \G$, $\SO4\otimes\Spin7$.
\end{enumerate}
\end{thm}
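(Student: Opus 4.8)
The plan is to first recast condition (a) in terms of the \emph{minimal reduction} of $\rho$, in the sense of \cite{GL}. Recall that a reduction of $(G,V)$ is a subspace $V'\subseteq V$ meeting all $G$-orbits such that, with $N=\{g\in G:\rho(g)V'=V'\}$, the inclusion induces an isometry $V'/N\cong V/G$; a minimal reduction is one with $\dim V'$ as small as possible. Since $N\subseteq G$, the identity component of the reduced group $N|_{V'}$ is the image of a subtorus of $G^{0}$; hence whenever $H^{0}$ is a torus, \emph{every} reduction of $(H,W)$ — in particular a minimal one — has toric identity component. Using the uniqueness properties of minimal reductions from \cite{GL} (a minimal reduction depends, up to orbit equivalence, only on the isometry type of the orbit space), one concludes that (a) is equivalent to the intrinsic condition that the minimal reduction of $\rho$ have toric identity component; as $\rho$ is not polar, this identity component is a \emph{positive-dimensional} torus. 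This turns (a) into something amenable to classification.

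Next I would prove the equivalence of (b) and (c). For (b)$\Rightarrow$(c): a principal orbit $S$ of the connected polar group $\hat G$ is an orbit of an $s$-representation, hence an isoparametric submanifold of $V$; since $\rho(G)\subseteq\hat G$, the group $G$ leaves $S$ invariant, and at a generic point $p\in S$ the count $\dim S-\dim(G\cdot p)=\mathrm{cohom}(\rho)-\mathrm{cohom}(\hat G)=1$ shows $G$ acts with cohomogeneity one on $S$. For (c)$\Rightarrow$(b): by Thorbergsson's homogeneity theorem (and a direct argument when $\mathrm{codim}\,S\le2$, which by the above forces $\mathrm{cohom}(\rho)=3$, recalling that irreducible representations of cohomogeneity $\le2$ are polar) the isoparametric submanifold $S$ is an orbit of a polar action of a connected group $\hat G\supseteq\rho(G)$, necessarily with $\mathrm{cohom}(\hat G)=\mathrm{codim}\,S=\mathrm{cohom}(\rho)-1$. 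Finally (c)$\Rightarrow$(a): the parallel and focal manifolds of $S$ form a singular Riemannian foliation of $V$ whose quotient is the simplicial cone $\Delta\cong\R^{g}/\mathcal W$, $g=\mathrm{codim}\,S$, with $\mathcal W$ the Coxeter group of $S$; because $G$ is connected and acts by isometries it preserves every leaf and acts on each with cohomogeneity one, so $V/G$ is a bundle of intervals or circles over $\Delta$, the fibre geometry being dictated by the curvature distributions of $S$. Tracing this identifies $V/G$ isometrically with the orbit space of a representation on $\R^{2}\oplus\R^{g}$ of a group with identity component $\SO2$ rotating the $\R^{2}$ factor, which is (a). (If the fibres were all intervals, $V/G$ would be a Weyl chamber and $\rho$ polar, contrary to hypothesis.)

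The core of the argument — and the expected main obstacle — is the implication (a)$\Rightarrow$(b),(c) together with the classification. By the first paragraph (a) means the minimal reduction of $\rho$ has positive-dimensional toric identity component; using minimality together with the irreducibility of $\rho$ one first shows the reducing torus must be one-dimensional, so that the minimal reduction has dimension $\mathrm{cohom}(\rho)+1$ with reduced group of identity component $\SO2$, i.e.\ $\rho$ has copolarity one. It then remains to enumerate the irreducible non-polar representations of connected compact groups of copolarity one, which I would do by combining Dadok's classification \cite{D} of polar representations (to pin down the candidate polar overgroups $\hat G$ and to discard the representations orbit-equivalent to one), the list of non-polar irreducible representations of small cohomogeneity, and the slice-representation calculus of \cite{GL}: for each candidate one determines the principal isotropy group and the slice representations along the singular strata, imposes the necessary local requirement that each slice representation again have toric minimal reduction, and keeps only the entries of the three tables. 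These organize themselves as: (i) the three non-polar cohomogeneity-three representations; (ii) the semisimple factors $G$ of the irreducible polar representations of Hermitian type — the ``missing'' circle being precisely the Hermitian centre of $\hat G$ — with the orbit-equivalent cases removed; and (iii) the two sporadic representations $\SO3\otimes\G$ and $\SO4\otimes\Spin7$. For each one then exhibits $\hat G$ and the invariant isoparametric submanifold $S$ explicitly, verifying (b) and (c) and closing the cycle of implications. The genuinely delicate steps are the rigidity and uniqueness of minimal reductions underpinning the first paragraph, the metric identification in (c)$\Rightarrow$(a), and above all the exhaustiveness of the case analysis that reduces (a) to the three tables.
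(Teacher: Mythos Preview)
Your equivalence (b)$\Leftrightarrow$(c) and your recasting of (a) via minimal reductions are essentially sound and parallel the paper. The decisive gap is the assertion that ``the reducing torus must be one-dimensional'', i.e.\ that (a) forces copolarity one. This is false: the two exceptional representations in family~(iii) have copolarity~$2$ (for $\SO3\otimes\G$, cohomogeneity~$4$) and copolarity~$3$ (for $\SO4\otimes\Spin7$, cohomogeneity~$5$), so their minimal reductions have identity components $T^2$ and $T^3$, not $\SO2$. Consequently your proposed classification scheme --- enumerate the irreducible non-polar representations of copolarity one --- would never find family~(iii). The same error infects your (c)$\Rightarrow$(a): if the orbit space $V/G$ were really isometric to $(\R^2\oplus\R^g)/H$ with $H^\circ=\SO2$ and $g=\mathrm{codim}\,S=\mathrm{cohom}(\rho)-1$, then by the very uniqueness of minimal reductions you invoke in your first paragraph the copolarity of $\rho$ would be at most~$1$, contradicting the cases in~(iii). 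So the ``bundle of circles over the Weyl chamber'' picture does not yield the metric identification you claim; the fibre geometry over the walls of $\Delta$ is more intricate than a single $\SO2$-rotation can encode.

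The paper avoids this trap altogether. For (a)$\Rightarrow$(b) it does \emph{not} pass through copolarity: it uses~\cite[Theorem~1.7]{GL} to normalise $(H^\circ,W)$ to the maximal torus $T^k\subset\SU{k+1}$ on~$\C^{k+1}$ (with $k$ arbitrary), enlarges $H$ to the normaliser $N$ of $H^\circ$ in $\OG W$ so that $N^\circ$ acts polarly, and then pushes the resulting flat quotient $Y=W/N$ back to $V$ via the composite submetry $V\to V/G\to Y$; O'Neill's formula plus equifocality give an isoparametric foliation of $V$, and Thorbergsson produces the polar overgroup $\hat G$. For the classification (b)$\Rightarrow$list, the paper does not attempt a direct slice-representation calculus; instead it observes that $G\times K_{\mathrm{princ}}$ must act with cohomogeneity one on the isotropy group $K$ of the ambient symmetric space, and invokes Kollross's classification of such actions (plus a short supplementary lemma handling non-maximality and non-simplicity). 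This is what makes the high-rank case tractable and is the step your proposal is missing.
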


A further motivation for Theorem~\ref{main}
comes from one of the main results of~\cite{GL}, which
gives another charaterization of the above class. Namely,
if $\rho:G\to\OG V$, $\rho':G'\to\OG{V'}$ are two non-polar
representations of a compact
Lie groups with isometric orbit spaces
$V/G=V'/G'$  such that  the restriction of $\rho$ to the identity
component $G^\circ$ is irreducible but that of $\rho'$ to $(G')^\circ$ is
reducible,
then the restriction of $\rho$ to $G^\circ$ satisfies the condition~(a),
for some representation $\tau:H\to\OG W$.

We are grateful to Wolfgang Ziller for pointing
out the connection with the work of Kollross
that considerably simplifies our classification.

\section{Non-classifying arguments}

Since principal orbits of polar representations are isoparametric submanifolds of Euclidean space~\cite{PT},
(b) implies~(c). Conversely, assume (c) holds. Then $S$ is full and irreducible, thus
either it is homogeneous or it has codimension two in $V$,
due to the theorem of Throbergsson \cite{Th2}.
In the former case, the maximal connected subgroup of $\OG V$
preserving~$S$ acts polarly
on $V$ and has $S$ as a principal orbit~\cite{PT}.
In the latter case, the cohomogeneity of $ G$ on $V$ is $3$.
Such actions are listed in~(i) above, and we can directly check that (b) holds in each
case by finding a larger connected
group acting as the isotropy representation of a symmetric
space of rank two, namely, $(\SO2\times\SO{16},\R^2\otimes_{\mathbf R}\R^{16})$,
$(\U 2\times\SU{2n},\C^2\otimes_{\mathbf C}\C^{2n})$, $(\SP2\times\SP n,\mathbf C^4\otimes_{\mathbf H}\C^{2n})$, respectively. Hence (b) and~(c) are equivalent.

Next we prove that (a) implies~(b). If $\tau$ is as in~(a), then the induced representation of $H^\circ$ is reducible, since $H^\circ$ is a torus.
Using~\cite[Theorem~1.7]{GL}, we deduce that 
the action of $H^\circ$ on $W$ can be identified with that of the maximal torus
$T^k$  of $\mathbf{SU}(k+1)$ on $\C^{k+1}$, for some $k\geq 1$
(possibly after replacing~$H$ and~$\tau$).  
Of course, $H$ is contained in the normalizer $N$ of $H^\circ$
in $\OG W$. Moreover, the connected component $N^\circ$ is the
maximal torus (of rank $k+1$)
of the unitary group of $W$, which acts polarly on $W$.

Let $X= V/G = W/H$, let $Y=X/(N/H)=W/N$ and denote by
$\pi$ denote the composite map
\[ V \to X  \to Y. \]
This composite map $\pi$ is a \emph{submetry} (see \cite{Ly,Guj}),
whose fibers are smooth
equidistant submanifolds of $V$.
Denote by $Y_{reg}$ the subset of principal $N/H$-orbits in $X$,
denote by $V_{reg}$ the set of $G$-regular points in $V$
and put $V'=\pi^{-1}(Y_{reg})\cap V_{reg}$. Then $V'$ is an open dense
subset of $V$ and $\pi:V'\to Y_{reg}$ is a Riemannian submersion.
Since the action of $N$ on $W$ is  polar,
$Y_{reg}$ is flat, so O'Neill's formula implies that the $\pi$-horizontal distribution
in $V'$ is integrable.  Hence the regular fibers of $f$ have flat normal bundles.
Moreover, each regular fiber is equifocal (the focal points are given by the intersection of a horizontal geodesic with singular fibers).
It follows that the fibers of $\pi$ in $V$
yield an isoparametric foliation $\mathcal F$
by full irreducible submanifolds of $V$.

We have $\cod\mathcal F=\dim Y = \dim X-1=k+1$.
In case $k=1$, the cohomogeneity of $\rho$ is $3$ and the result
follows as above, so we may assume $k\geq2$.
By the theorem of Thorbergsson~\cite{Th2}, $\mathcal F$ is homogeneous,
and the maximal connected  subgroup $\hat G \subset\mathbf O(V)$
which preserves the foliations acts transitively on the leaves.
By definition, $\hat G$ is closed, acts polarly and contains
$G$. By construction, $V/\hat G =Y$.
Hence we have~(b).

It was already remarked in~\cite{GL}
that the representations in~(i), (ii), (iii) all
satisfy~(a).  More precisely, representations of cohomogeneity three 
have copolarity one~(\cite[Example~1.9]{GL}; see also~\cite{str}).
The representations listed in (iii) have
copolarity 2 and 3 respectively~\cite[Theorem~1.11]{GL}. 
Thus, due to Theorem~1.5 in the same paper, the representations 
listed in~(i) and~(iii)  satisfy~(a).
That the representations coming from the Hermitian symmetric spaces and
listed in~(ii) satisfy (a) has been observed in~\cite[Example~1.10]{GL};
see also~\cite{got}.

We shall finish the proof of Theorem~\ref{main} in the next section
by proving that the representations satisfying~(b) are listed
in~(i), (ii) and~(iii) above.

\section{The classification}

Let $\hat G$ be as in~(b). Consider the maximal connected
subgroup~$K$ of~$\OG V$ with the same orbits as $\hat G$.
It is known that the action of $K$ on $V$ is the isotropy
representation of an irreducible symmetric space~\cite[Prop.~4.3.9]{BCO},
say, of compact type. Let $(L,K)$ and $M=L/K$ be the corresponding
symmetric pair and symmetric space, respectively.
The cohomogeneity $c(K,V)$ is just the rank~$r=\mathrm{rk}(M)$
of the symmetric space $M$. We have $G\subset K$ and
$c(G,V)=c(K,V)+1=r+1$. We run over the cases
of irreducible symmetric spaces of compact type~\cite{Hel,Wolf}.

\subsection{Adjoint representations}\label{II}
These are associated to symmetric spaces of type~II.
For the adjoint representation of $K$ on its Lie algebra $\Lk$
we have that $\Lg$, the Lie algebra of $G$,
is a proper invariant subspace of $\Lk$
for the $G$-action, hence the action of $G$ on $\Lk$
is not irreducible.

\subsection{List of symmetric spaces of type I}
One says that the symmetric space $M=L/K$ has maximal rank
if $\mathrm{rk}(M)=\mathrm{rk}(L)$. Such spaces are marked with $*$
below. The space marked $**$ has maximal rank if and only
if $|p-q|\leq1$.
\[ \begin{array}{|c|c|c|}
\hline
L/K & \textsl{dimension} & \textsl{rank} \\
\hline
\SU n/\SO n* & \mbox{\small$\frac12(n-1)(n+2)$} & \mbox{\small $n-1$}\\
\SU{2n}/\SP n  & \mbox{\small$(n-1)(2n+1)$} & \mbox{\small $n-1$} \\
\SU{p+q}/\mathbf S(\U p\times\U q) & 2pq & \mbox{\small $\min\{p,q\}$} \\
\SO{p+q}/(\SO p\times\SO q)** & pq & \mbox{\small $\min\{p,q\}$} \\
\SO{2n}/\U n & n(n-1) & [\frac n2] \\
\SP n/\U n* & n(n+1) & n \\
\SP{p+q}/(\SP p\times\SP q) & 4pq & \mbox{\small $\min\{p,q\}$} \\
\E6/\SP4* & 42 & 6 \\
\E6/\SU6\SU2 & 40 & 4 \\
\E6/\Spin{10}\U1 & 32 & 2 \\
\E6/\F & 26 & 2 \\
\E7/\SU8* & 70 & 7 \\
\E7/\Spin{12}\SU2 & 64 & 4 \\
\E7/\E6\U1 &  54 & 3 \\
\E8/\Spin{16}*& 128 & 8   \\
\E8/\E7\SU2 & 112 & 4  \\
\F/\SP3\SP1* & 28 & 4 \\
\F/\Spin9 & 16 & 1   \\
\G/\SO4* & 8 & 2 \\
\hline
\end{array} \]

\subsection{Spaces of low rank} Here we deal with the case $r\leq4$.
We rely on the classification of
irreducible representations of connected groups
of cohomogeneity at most five (\cite{str2} and \cite[Theorem~1.11]{GL}).

Recall that $c(G,V)=r+1$. If $r=1$, then $c(G,V)=2$ and $\rho$ is polar.

If $r=2$,
then $c(G,V)=3$ and $\rho$ is listed in~(i)~\cite{str2}.

If $r=3$, then $c(G,V)=4$
and $\rho$ is given in~\cite[Table~1]{GL}. The only cases in the
table not listed in~(ii) or~(iii) are the first two,
for which $\dim M=\dim V\leq8$ and
$\mathrm{rk}(M)=3$. But there are no symmetric spaces under these conditions.

Finally, if $r=4$ then $c(G,V)=5$ and $\rho$ is given in~\cite[Table~2]{GL}.
The only cases in the table not listed in~(ii) or~(iii)
are the first two and the last.
In those cases the dimension of $V$ is $8$, $12$ or $24$.
Going through the list of symmetric spaces of type~I  and  rank~$4$, we
see that $\dim (V)=\dim (M)$  cannot be $8$ or $12$ and we are left with
the 24-dimensional case
$M= \SO {10} / (\SO6 \times \SO4$).  Thus we only have to exclude the
case $K= \SO6 \times \SO4$ and $G=\U3 \times \SP2$.
However, $\U3 \times \SP2$ cannot be a subgroup of
$\SO6 \times \SO4$.

Henceforth we shall assume $r\geq5$.

\subsection{Spaces of maximal rank}
In this case, the principal isotropy group
$K_{\mathrm{princ}}$ is finite, so also $G_{\mathrm{princ}}$ is finite.
Therefore  $G$ has codimension one in $K$, so it is a normal
subgroup of $K$.  Since $K$ has a normal subgroup of codimension one,
it has a normal (hence central) subgroup of dimension $1$. Thus $K$ has a circle factor and the corresponding symmetric space $M$ is Hermitian.
Since the rank is maximal
and at least $5$,  we deduce $M=\SP n/\U n$.
In this case, there is a unique codimension one subgroup of $K$
and we get one example $(\SU n,S^2\C^n)$ listed in~(ii).

\subsection{Reformulation} \label{reformulation}
There remain five classical families
of symmetric spaces that we are going to analyse in the following.
For easy reference,
we list their isotropy representations together with the identity
components of their principal isotropy groups.

{\small
\setlength{\extrarowheight}{.08in}
\[ \begin{array}{|c|c|c|c|}
\hline
K & V & H:=(K_{\mathrm{princ}})^\circ & Conditions \\
\hline
\SP n  & [\Lambda^2\C^{2n}\ominus\C]_{\mathbf R} & \SP1^n & n\geq6\\
\mathbf S(\U p\times\U q)\ \mbox{($p\geq q$)}
& \C^p\otimes_{\mathbf C}\C^{q*} & \mathbf S(\U{p-q}\times\U1^q) & p\geq q\geq5\\
\SO p\times\SO q\ \mbox{($p\geq q$)} & \R^p\otimes_{\mathbf R}\R^q & \SO{p-q}
& p\geq q+2\geq7\\
\U n & \Lambda^2\C^n & \begin{array}{c}
  \SU2^{\frac n2}\ \mbox{if $n$ is even} \\
  \SU2^{\frac{n-1}2}\U1\ \mbox{if $n$ is odd}\end{array} & n\geq10\\
\SP p\times\SP q\ \mbox{($p\geq q$)} & \Q^p\otimes_{\mathbf H}\Q^{q*}&
\SP{p-q}\times\SP1^q & p\geq q\geq5\\
\hline
\end{array} \]}

\subsection{Relation with the work of Kollross} \label{Kollross}
Since $G$ acts on the Euclidean space~$V$ with cohomogeneity one less than $K$,
the action of $G$ on the principal orbits of $K$ has cohomogeneity one.
Since those orbits are of type $K/K_{\mathrm{princ}}$,
this means that   $G\times K_{\mathrm{princ}}$,  and therefore $G\times H$
where $H:=K_{\mathrm{princ}}^\circ$, acts on the Lie group $K$
with cohomogeneity one, where the first factor acts
from the left and the second from the right.
Such (and more general) cohomogeneity one actions
have been extensively studied by Kollross.  He has
classified such actions in the case of simple groups $K$,
unfortunately, only  \emph{up to orbit equivalence}.  More precisely,
he has shown that for any such action, one finds larger connected groups
$G \subset \tilde G \subset K$ and $H \subset \tilde H \subset K$,
such that the triple $(\tilde G, K, \tilde H)$ is listed in \cite[Theorem~B]{K}.

Using this theorem of Kollross  it will be easy to finish the proof of our
theorem. We only need to circumvent some minor difficulties related to the
fact that our group $K$ needs not be simple and
that our pair $(G,H)$ needs not be maximal.

We will derive from~\cite{K} the following:

\begin{lem}  \label{genlem}
Let $\mathbf F$ be $\mathbf R$, $\mathbf C$ or $\mathbf H$.
Let $K$ be respectively  $\SO n$, $\SU n$, $\SP n$ acting on
$V=\mathbf F^n$, with $n\geq 5$.
Let $H$ be a subgroup of $K$. Assume that $V$ decomposes into $H$-invariant
subspaces as $V=V_1 \oplus V_2 \oplus V_3$, with
$1\leq \dim _{\mathbf F} (V_1 ) \leq \dim _{\mathbf F} (V_2) \leq \dim _{\mathbf F} (V_3 )$.
Assume that for a proper subgroup $G\subset K$ the group $G\times H$ acts
with cohomogeneity at most one on $K$. Then
$\mathbf F \neq \mathbf H$.  If $\mathbf F=\mathbf C$ then $n$ is even and
$G$ is contained in $\SP {\frac n 2}$.
If $K =\SO 7$ then $G \subset \mathbf G _2$. Finally, in all cases,
$V_3$ has either $\mathbf F$-dimension or $\mathbf F$-codimension at most $3$.
\end{lem}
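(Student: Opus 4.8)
The plan is to reduce the statement of \lref{genlem} to Kollross's classification \cite[Theorem~B]{K}, handling the non-maximality of $(G,H)$ and the possible non-simplicity of $K$ as routine preliminary reductions. The key point is that a subgroup $H \subset K$ preserving a decomposition $V = V_1 \oplus V_2 \oplus V_3$ with all $V_i$ nonzero is automatically contained in $S(\U{n_1}\times\U{n_2}\times\U{n_3})$-type block subgroups (over the appropriate $\mathbf F$), so $H$ is contained in a proper subgroup of $K$; hence $G \times H$ acting with cohomogeneity $\le 1$ on $K$ is a cohomogeneity $\le 1$ action of a product of two proper subgroups. First I would pass to connected groups: replacing $G$, $H$ by their identity components only decreases the number of orbit types and keeps the cohomogeneity at most one, and the conclusions ($\mathbf F \neq \mathbf H$, $G \subset \SP{n/2}$, $G\subset\G$, the dimension bound on $V_3$) are all statements that pass to and from finite extensions.

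Next I would enlarge $(G,H)$ to a maximal such pair: by the remark following Kollross's theorem in \secref{Kollross}, there exist connected $G \subset \tilde G \subset K$ and $H \subset \tilde H \subset K$ with $(\tilde G, K, \tilde H)$ appearing in \cite[Theorem~B]{K}, and with $\tilde G$, $\tilde H$ still proper (since $H$, and a fortiori $\tilde H$ — being forced to still preserve some nontrivial block decomposition, or at least being proper — is proper, and $G$ proper forces $\tilde G$ proper after we check the relevant entries). The bulk of the proof is then a finite inspection of \cite[Theorem~B]{K} for $K = \SO n$, $\SU n$, $\SP n$ with $n \ge 5$: for each triple $(\tilde G, K, \tilde H)$ in the list with $\tilde H$ of the block form forced by our hypothesis, I would read off $\tilde G$ and check (1) that $\mathbf F = \mathbf H$ never occurs, (2) that when $\mathbf F = \mathbf C$ the group $\tilde G$ (hence $G$) lies in some $\SP{n/2}$, which forces $n$ even, (3) that for $K = \SO 7$ one gets $\tilde G \subset \G$, and (4) that in every surviving case the largest block $V_3$ has $\mathbf F$-dimension $\le 3$ or $\mathbf F$-codimension $\le 3$ — equivalently the smallest two blocks are small, which is exactly the constraint that makes the left factor $\tilde G$ act with cohomogeneity one on $K/\tilde H$.

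The main obstacle I expect is bookkeeping rather than conceptual: Kollross classifies cohomogeneity one actions \emph{up to orbit equivalence} and only for \emph{simple} $K$, so I must (a) descend from $(\tilde G, \tilde H)$ back to $(G, H)$ — here I would use that all four conclusions are inherited by subgroups (a subgroup of $\SP{n/2}$ is in $\SP{n/2}$, a subgroup of $\G$ is in $\G$, the dimension bound on $V_3$ is a statement about $H$ alone via the block decomposition, and $\mathbf F \neq \mathbf H$ is unconditional); and (b) reconcile the hypothesis that $H$ preserves a \emph{three}-block decomposition with the principal-isotropy data tabulated in \secref{reformulation}, where $H$ is typically a product of many $\SP 1$'s or $\SU 2$'s — so I would observe that $V_1$, $V_2$ may themselves be sums of such small blocks, and only the coarser three-block grouping matters for the orbit-equivalence reduction. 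A secondary subtlety is the degenerate possibility $\dim_{\mathbf F} V_1 = \dim_{\mathbf F} V_2$ with both small, or $V_3$ nearly all of $V$; these are precisely the borderline entries of Kollross's list and must be checked by hand, but there are only finitely many.
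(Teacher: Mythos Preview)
Your reduction to Kollross's list has a real gap: you never establish that $G$ acts irreducibly on $V$, and without this the inspection of \cite[Theorem~B]{K} cannot force $G\subset\SP{\frac n2}$ or $G\subset\G$. If $G$ preserved a proper $\mathbf F$-subspace $W$, then any maximal connected $\tilde G\supset G$ could be taken to be the block stabilizer $(K_W)^\circ$; but Kollross's list \emph{does} contain triples with both factors of block type (for instance $(\mathbf S(\U p\times\U{n-p}),\SU n,\mathbf S(\U q\times\U{n-q}))$ for suitable $p,q$), so nothing in the list alone rules out a reducible $\tilde G$. The paper disposes of this possibility by a separate argument you are missing: if $G\subset K_W$ then $H$ would act with cohomogeneity $\le1$ on the Grassmannian $K/K_W$, which is impossible because $H$ preserves the relative positions of any subspace to each of the three pieces $V_1,V_2,V_3$, making the quotient at least two-dimensional. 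Only after this does one know that every maximal connected $\tilde G\supset G$ is itself irreducible, which is what pins down the entries in the list.

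A second issue is your handling of $\tilde H$. You hedge (``being forced to still preserve some nontrivial block decomposition, or at least being proper''), but Kollross's theorem only guarantees that \emph{some} enlargement of $(G,H)$ appears in his list, and a generic maximal $\tilde H\supset H$ need not be of block type at all. The paper makes the explicit choice $\tilde H=(K_{V_3})^\circ$, which is simultaneously a maximal connected subgroup of $K$ and the two-block stabilizer of $V_3$; the dimension/codimension bound on $V_3$ then follows directly from the restrictions on the block sizes visible in the list. Finally, the cohomogeneity-zero (transitive) case is governed by Onishchik's list~\cite{on}, not Kollross's, and this is precisely where the exceptional entry $K=\SO7$, $\tilde G=\G$ arises.
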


\begin{proof}

Assume first that $G$ acts reducibly on $V$, namely, leaves a proper subspace
$W$ of $V$ invariant. Let $K_W =\{k \in K | k \cdot W =W \}$.
Then  $\mathcal O=K/K_W$ is the Grassmannian of $\dim (W)$-dimensional subspaces of
$V$ and, since $G\subset K_W$, the group $H$ acts with cohomogeneity at most one
on~$\mathcal O$. However, this is impossible, since $H$ preserves the relative
positions to the $V_i$ of any subspace $W' \in \mathcal O$
and this implies that $\mathcal O/H$ is at least two-dimensional.

Thus $G$ acts irreducibly on $V$.  Consider now a maximal connected
subgroup $\tilde G$ of $K$ containing $G$. Set $\tilde H=(K_{V_3})^\circ$.
Then $\tilde H$ is a maximal connected subgroup of $K$ containing $H$.
Thus $\tilde G \times \tilde H$ acts on $K$ with cohomogeneity
at most one. Hence it must appear either in the list of Onishik~\cite{on},
or in the list of Kollross.

From the list of Onishchik, we only get $K=\SO 7$ and $\tilde G =G_2$.
In the list of Kollross, we see the remaining statements.
\end{proof}

\subsection{General consequences}
In
case $K$ is a direct product $K=K_1\times K_2$, we denote
by $G_i$ and $H_i$, respectively, the projections of $G$ and of
$H$ to $K_i$.  Then
$1=\dim (K/G\times H) \geq\dim (K_1 /G_1 \times H_1 ) +\dim (K_2 /G_2 \times H_2)$.

In any case, observe now that for each one of the groups $K$ appearing in
Subsection~\ref{reformulation}, the assumption that the rank is at least~$5$
and the structure of the principal isotropy group $K_{\mathrm{princ}}$
implies the following: for any simple non-abelian factor
$K_i$ of $K$, the triple $(G_i,K_i,H_i)$
has the form as in \lref{genlem} above.
Moreover, the decomposition of the space $\mathbf F^n$
into $H_i$-irreducible subspaces has at least five
summands.  Next we proceed case by case.

\subsection{Symplectic cases}
Assume that $K=\SP n$ or $K= \SP p  \times \SP q$.  From \lref{genlem} we see that
the projection of $G$ to each factor $K_i$ coincides with $K_i$.  We  deduce
$K=G$, unless possibly $K=\SP q \times \SP q$ and $G$ being the diagonal subgroup
of $K$ (up to conjugation), isomorphic to~$\SP q$.
However, in the last case $H = \SP 1 ^q$  and $G\times H$ cannot act on $K$ with
codimension~$1$ since $\dim (K) -\dim (G) -\dim (H) >1$.

\subsection{Real case}
Assume that $K=\SO p \times \SO q$, with $p\geq q+2 \geq 7$.
The projection  of $H$ to
$\SO q$ is trivial, thus the projection of $G$ to $\SO q$ has codimension at
most $1$ in $\SO q$.
Hence this projection coincides with $\SO q$.  The projection of $H$ to $\SO p$
fixes pointwise a $5$-dimensional
subspace.  If $p \geq 8$ we find an $H$-invariant decomposition
$\mathbf R ^p =V_1 \oplus V_2 \oplus V_3$ with dimension
and codimension of $V_3$ at least $4$; and applying \lref{genlem} we deduce that
the projection $G_1$ of $G$ to $\SO p$
is $\SO p$.   We are left with the case $p=7, q=5$.
Then the projection $H_1$ of $H$ to $\SO p$ is $\SO 2$
and, due to \lref{genlem}, the group $G_1$ must be a subgroup of the
$14$-dimensional group $\mathbf G_2$.
Then $G_1 \times H_1$ cannot act on $\SO 7$ with cohomogeneity $1$.

It follows that the projections of $G$ to the simple factors of $K$ coincide with the factors. Since by assumption $p\neq q$, we get  $G=K$.

\subsection{Complex case}
Assume that $K= \U n$ with $n\geq 10$.  Then, for the projection $H_1$ of $H$
to the semisimple part $\SU n$, the space $\mathbf C ^n$ decomposes into a
sum of at least five $H_1$-invariant subspaces of complex dimension $2$.
From \lref{genlem} we deduce that the projection $G_1$ of $G$ to $\SU n$ coincides
with $\SU n$. Therefore $G$ has codimension $1$ in $K$. Hence~$G= \SU n$
and we get one example $(\SU n,\Lambda^2\C^n)$, where $n$ is even, listed in~(ii)
(note that in case $n$ is odd, the action of $\SU n$ is orbit-equivalent
to that of $\U n$, which is polar~\cite{EH}).

\subsection{Complex Grassmanian}
Assume that $K= \mathbf S (\U p \times \U q )$ with $p\geq q \geq 5$.
Applying \lref{genlem} to the projection  $H_2$ of $H$ and $G_2$ of $G$
to $\SU q$, we see that $G_2 =\SU q$, unless possibly $q= 6$ and
$G_2 \subset \SP 3$. But in  the latter case, $H_2= \mathbf S (\U 1^6) $ and
$G_2 \times H_2$ cannot act with cohomogeneity $1$ on $\SU 6$.

Similarly, for the projection of $G_1$ of $G$ to $\SU p$ we deduce that
$G_1 = \SU p$, unless possibly $p=6$ and $G_1 \subset \SP 3$.
In the latter case the projection $H_1$ of $H$ to $\SU p$ is equal
to $\mathbf S (\U 1^6 ) $, which is again impossible by dimensional reasons.

Thus the projection of $G$ to the simple factors of $K$ is surjective.
Now either the projection $\bar G$ of $G$ to $\SU p \times \SU q$ is surjective,
or $p=q$ and the image $\bar G$ is the twisted diagonal subgroup
$\{(g,\alpha(g)):g\in\SU q\}$ where $\alpha$ is an automorphism of
$\SU q$.
The last case is again impossible by dimensional reasons.
Thus $\bar G = \SU p \times \SU q$.

Therefore, $G$ has codimension one in $K$. Hence $G= \SU p \times \SU q$
and we get one example $(\SU p\times\SU p,\C^p\otimes\C^p)$ listed in~(ii)
(note that in case $p>q$ the action of $\SU p\times \SU q$ is orbit-equivalent to
that of $\mathbf S(\U p\times \U q)$, which is polar~\cite{EH}).


\providecommand{\bysame}{\leavevmode\hbox to3em{\hrulefill}\thinspace}
\providecommand{\MR}{\relax\ifhmode\unskip\space\fi MR }
\providecommand{\MRhref}[2]{%
  \href{http://www.ams.org/mathscinet-getitem?mr=#1}{#2}
}
\providecommand{\href}[2]{#2}

\end{document}